\newtheorem{thm}{Theorem}[section]
\newtheorem{cor}{Corollary}[section]
\newcommand{\ppp}{\partial}
\newcommand{\ep}{\varepsilon}
\newcommand{\www}{\widetilde}
\title{\textbf{
{Uniqueness in inverse boundary value problems
for fractional diffusion equations} }}
\author{\textbf{Zhiyuan Li$^1$},  \textbf{O.~Yu.~Imanuvilov}$^2$, 
\textbf{Masahiro Yamamoto$^3$}}
\date{}
\begin{document}
 \maketitle

\begin{center}
$^{1,3}$ Graduate School of Mathematical Sciences, the University of Tokyo, 
3-8-1 Komaba, Meguro-ku, Tokyo, 153-8914, Japan.\\
E-mail: zyli@ms.u-tokyo.ac.jp, myama@ms.u-tokyo.ac.jp\\
$^2$Department of Mathematics, Colorado State University, 
101 Weber Building, Fort Collins, CO, 80523-1874, U.S.A.\\
E-mail: oleg@math.colostate.edu.
\end{center}

\begin{abstract}
We consider an inverse boundary value problem for diffusion equations with 
multiple fractional time derivatives.
We prove the uniqueness in determining a number of fractional time-derivative 
terms, the orders of the derivatives and spatially varying coefficients.
\end{abstract}

\renewcommand{\thefootnote}{\fnsymbol{footnote}}

\footnotetext{
$^{1,3}$ supported by the Leading Graduate Course for 
Frontiers of Mathematical Sciences and Physics
(The University of Tokyo).\\
$^2$ Partially supported by NSF grant DMS 1312900
}

\textbf{Keywords:} fractional diffusion equation,
  inverse problem, determination of fractional orders,
  Dirichlet-to-Neumann map,  multi-time-fractional derivatives.

%%%%%%%%%%%%%%%%%%%%%%%%%%%%%%%%%%%%%%%%%%%%%%%%%%%%%%%%%%%%%%%%%%%%%%%%%%%%%%%%%%%%%%%%%%%%%%%%%%%%%%%%%%%%%%%
%%%%%%%%%%%%%%%%%%%%%%%%%%%%%%%%%%%%%%%%%%%%%%%%%%%%%%%%%%%%%%%%%%%%%%%%%%%%%%%%%%%%%%%%%%%%%%%%%%%%%%%%%%%%%%%

\section{Introduction}
Let $\Omega$ be an open bounded domain in $\mathbb{R}^d$ with smooth boundary, 
for example, of $C^2$-class,
$\nu$ be the outward unit normal vector to $\partial\Omega$.
We denote $\frac{\partial u}{\partial\nu}=\nabla u\cdot\nu$. 
Let $T>0$ be fixed arbitrarily.
Consider the following initial-boundary value problem
\begin{equation}\label{equ-u}
\left\{
  \begin{array}{*{20}c}
 \sum_{j=1}^{\ell} p_j(x)\partial^{\alpha_j}_t u(x,t)
 =\Delta u(x,t) + p(x)u(x,t), \quad (x,t)\in\Omega\times(0,T), \hfill\\
  {u(x,0)=0,\quad x\in \Omega,} \hfill \\
  {u\vert_{\partial\Omega} = \lambda(t)g(x), \quad 0 <  t < T,} \hfill
 \end{array} \right.
 \end{equation}
where $\alpha_j$, $j=1,\cdots,\ell$, are positive constants such that
\begin{align}\label{alpha}
0<\alpha_{\ell}<\cdots<\alpha_1<1.
\end{align}
Here and henceforth, for $\alpha\in(0,1)$,
by $\partial_t^{\alpha}v$ we denote the Caputo fractional
derivative with respect to $t$:
\begin{equation}\label{def-Caputo}
\partial_t^{\alpha} v(t) = \frac{1}{\Gamma(1-\alpha)}
\int_0^t (t-\tau)^{-\alpha}\frac{d}{d\tau} v(\tau) d\tau
\end{equation}
(e.g., Podlubny \cite{Podlubny}) and $\Gamma$ is the Gamma function.

The case of $\ell=1$, i.e., a single-term time-fractional diffusion
equation is used for example as a model equation for 
the anomalous diffusion phenomena in heterogeneous media
(e.g., Metzler and Klafter \cite{MK}).  We further refer to 
Kilbas, Srivastava and Trujillo \cite{KST}, Luchko \cite{Lu2009JMAA},
Luchko and Gorenflo \cite{LG99}, Mainardi \cite{Mai}, Podlubny \cite{Podlubny},
Sakamoto and Yamamoto \cite{SY}.

On the other hand, diffusion equations whose orders of the derivatives change
in time and/or spatial coordinates, are proposed as feasible models
(e.g., Chechkin, Gorenflo and Sokolov \cite{CGS}, Coimbra \cite{Co},
Lorenzo and Hartley \cite{LM}, Mainardi, Mura, Pagnini and Gorenflo 
\cite{MMPG}, Pedro, Kobayshi, Pereira and Coimbra \cite{PKPC},
Sokolov, Chechkin and Klafter \cite{SCK}.)
Among them, we consider a multi-term time-fractional diffusion equation 
(\ref{equ-u}).

For applying (\ref{equ-u}) as model equation, in order to interpret 
measurement data, we usually need to suitably choose
$\ell, p_j, \alpha_j, p$ which describe physical properties of the 
diffusion process under consideration.
This is our inverse problem, and we discuss the uniqueness as
the fundamental theoretical topic for the inverse problem.

Henceforth, for $\ell \in \Bbb N$, we set $\vec{\alpha} 
= (\alpha_1, ..., \alpha_{\ell}) \in (0,1)^{\ell}$ where
$\alpha_{\ell} < \alpha_{\ell-1} < \cdots < \alpha_1$.
We note that also $\ell$ is unknown parameter in the inverse problem.

We state\\
{\bf Inverse problem}
Let $\lambda \not\equiv 0$ be fixed.
For $g\in H^{\frac{3}{2}}(\partial\Omega)$, 
we define the Dirichlet-to-Neumann 
map by
$$
\Lambda(\ell,\vec{\alpha},p_j,p)g:=\frac{\partial u}{\partial \nu}|
_{\partial\Omega\times(0,T)} \in L^2(0,T;H^{\frac{1}{2}}(\ppp\Omega)).
$$
Can we uniquely determine $(\ell,\vec{\alpha},p_j,p)$
by the map $\Lambda(\ell,\vec{\alpha},p_j,p): H^{\frac{3}{2}}(\ppp\Omega)
\longrightarrow L^2(0,T; H^{\frac{1}{2}}(\ppp\Omega))$?

In Section 2, we prove that the Dirichlet-to-Neumann map is well
defined.  Our inverse problem is based on the Dirichlet-to-Neumann map,
and for elliptic equations, there have been numerous important works.
Here we do not intend any lists of references and we refer only to 
Imanuvilov and Yamamoto \cite{IYAM}, Isakov \cite{Is},
Sylvester and Uhlmann \cite{SU} and the references therein.

For the statement of our main results, we introduce some notations.
As an admissible set of unknown fractional orders including numbers 
and coefficients,
we set
$$
\mathcal{U} = \{(\ell, \vec{\alpha}, p_1, ..., p_{\ell}, p)
\in \Bbb N \times (0,1)^{\ell}\times C^{\infty}(\overline{\Omega})
^{\ell+1}; \thinspace p_j \ge 0, \not\equiv 0, \thinspace
j=2,3,..., \ell, \thinspace p_1 > 0, \thinspace p\le 0 
\thinspace \mbox{on} \thinspace \overline\Omega \}.
$$
where $\vec{\alpha}:=(\alpha_1,\cdots,\alpha_\ell)$ such that $\alpha_\ell<\alpha_{\ell-1}<\cdots<\alpha_1$.
For $\theta \in \left(0, \frac{\pi}{2}\right)$, we further set
$$
\Omega_{\theta} :=\{z\in\mathbb{C};\thinspace z\neq 0, \thinspace
|\arg z|<\theta \}.
$$

We are ready to state our main result.
\begin{thm}[Uniqueness]\label{thm-uniqu}
Let $(\ell,\vec{\alpha},p_j,p) \in \mathcal{U}$ and $(m,\vec{\beta},q_j,q) \in
\mathcal{U}$.
Assume that for some $\theta\in (0,\frac\pi 2)$ the function 
$\lambda \not\equiv 0$ 
can be analytically extended to $\Omega_{\theta}$ with $\lambda(0)=0$
and $\lambda'(0)=0$ and there exists a constant $C_0>0$ such that 
$\vert \lambda^{(k)}(t) \vert \le C_0e^{C_0t}$, $t>0$, $0\le k \le 2$.
Then $\ell=m$, $\vec{\alpha}=\vec{\beta}$, $p_j=q_j$, $1\le j\le \ell$ and $p=q$ provided
\begin{align}\label{DN}
\Lambda(\ell,\vec{\alpha},p_j,p)g = \Lambda(m,\vec{\beta},q_j,q)g, \quad
g \in H^{\frac{3}{2}}(\partial\Omega).
\end{align}
\end{thm}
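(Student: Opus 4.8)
The plan is to pass from the time-fractional problem to a one-parameter family of elliptic inverse problems via the Laplace transform, to solve each of those by the classical Calder\'on-type uniqueness cited in the introduction, and finally to separate the resulting identity in the Laplace variable by linear independence of fractional powers. First I would take the Laplace transform $\widehat u(x,s)=\int_0^\infty e^{-st}u(x,t)\,dt$. The growth hypothesis $|\lambda^{(k)}(t)|\le C_0e^{C_0t}$, together with the well-posedness and energy estimates of Section~2, should guarantee that $\widehat u(\cdot,s)$ is well defined and holomorphic in $s$ for $\operatorname{Re}s>C_0$. Since $u(\cdot,0)=0$, the Caputo derivative transforms as $\widehat{\partial_t^{\alpha_j}u}(x,s)=s^{\alpha_j}\widehat u(x,s)$, so $\widehat u$ solves
\begin{equation*}
\Delta\widehat u+\Bigl(p(x)-\sum_{j=1}^{\ell}p_j(x)s^{\alpha_j}\Bigr)\widehat u=0 \ \text{in}\ \Omega,\qquad \widehat u|_{\partial\Omega}=\widehat\lambda(s)\,g.
\end{equation*}
Writing $Q_1(x,s):=p(x)-\sum_{j=1}^{\ell}p_j(x)s^{\alpha_j}$ and defining $Q_2$ analogously from $(m,\vec{\beta},q_j,q)$, the sign conditions $p\le 0$, $p_j\ge 0$ force $Q_1(\cdot,s)\le 0$ for $s>0$, so $-\Delta-Q_1(\cdot,s)$ is positive on $H_0^1(\Omega)$, $0$ is not a Dirichlet eigenvalue, and each elliptic problem is uniquely solvable. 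Hence the $s$-dependent elliptic Dirichlet-to-Neumann map $\mathcal{N}_i(s)\colon g\mapsto\partial_\nu\widehat u|_{\partial\Omega}$ is well defined.

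Next I would rewrite hypothesis (\ref{DN}). Since the two time-domain maps agree, so do their Laplace transforms, and by linearity the factor $\widehat\lambda(s)$ separates off: $\widehat{\Lambda(\ell,\vec{\alpha},p_j,p)g}(s)=\widehat\lambda(s)\,\mathcal{N}_1(s)g$, and likewise for the second tuple. Because $\lambda\not\equiv 0$ is analytic, $\widehat\lambda$ cannot vanish identically, so $\widehat\lambda(s)\ne 0$ off a discrete set; dividing and using holomorphy in $s$ yields $\mathcal{N}_1(s)=\mathcal{N}_2(s)$ for all $s>C_0$. For each such fixed $s$ this is precisely the equality of the Dirichlet-to-Neumann maps of the Schr\"odinger equations with potentials $Q_1(\cdot,s)$ and $Q_2(\cdot,s)$, so the Calder\'on-problem uniqueness (Sylvester and Uhlmann \cite{SU} for $d\ge 3$, Imanuvilov and Yamamoto \cite{IYAM} for $d=2$) gives
\begin{equation*}
p(x)-\sum_{j=1}^{\ell}p_j(x)s^{\alpha_j}=q(x)-\sum_{k=1}^{m}q_k(x)s^{\beta_k},\qquad x\in\Omega,\ s>C_0.
\end{equation*}

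Finally I would argue algebraically at each fixed $x$. The difference of the two sides is a finite linear combination of the functions $\{1\}\cup\{s^{\gamma}:\gamma\in\{\alpha_j\}\cup\{\beta_k\}\}$, which are linearly independent on $(C_0,\infty)$ (substituting $s=e^t$ reduces this to the independence of exponentials $e^{\gamma t}$ with distinct exponents). Matching the constant terms gives $p=q$. If some $\alpha_j$ differed from every $\beta_k$, independence would force $p_j\equiv 0$, contradicting $p_1>0$ or $p_j\not\equiv 0$; symmetrically for each $\beta_k$. Hence $\{\alpha_j\}=\{\beta_k\}$, so $\ell=m$ and, by the common decreasing ordering, $\vec{\alpha}=\vec{\beta}$, while matching the coefficient of each $s^{\alpha_j}$ yields $p_j=q_j$. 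I expect the chief obstacle to lie in the first two steps rather than in this elementary separation: namely, establishing uniform well-posedness and holomorphy of the elliptic family $\widehat u(\cdot,s)$ and rigorously passing from equality of the time-domain maps to equality of the elliptic maps $\mathcal{N}_i(s)$ over a full half-line of $s$, since the analytic machinery underpinning the Laplace-transform reduction is exactly what converts the single observed map into the rich $s$-parametrized data that the Calder\'on argument requires.
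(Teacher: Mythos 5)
Your overall route is the paper's own: Laplace transform in $t$, reduction to the $s$-parametrized Schr\"odinger problems with potentials $P_s = p-\sum_{j} p_j s^{\alpha_j}$, the Sylvester--Uhlmann uniqueness at each admissible $s$, and finally separation in $s$. Your last step is fine and in fact more detailed than the paper, which merely asserts the conclusion once $P_s=Q_s$ holds for $s$ in a set containing an open interval; your reduction via $s=e^t$ to linear independence of exponentials is a correct way to fill that in, and your treatment of the zero set of $\widehat\lambda$ matches the paper's set $\sigma$. (One remark: you do not need $\mathcal{N}_1(s)=\mathcal{N}_2(s)$ on the full half-line, which would require holomorphy of the elliptic family in $s$; the open interval inside $\sigma$ already suffices for the independence argument.)

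However, there is a genuine gap at the very first step, and it is not the one you flag. Hypothesis (\ref{DN}) gives equality of $\partial u_1(g)/\partial\nu$ and $\partial u_2(g)/\partial\nu$ only on $\partial\Omega\times(0,T)$ with $T<\infty$ fixed: the map $\Lambda$ takes values in $L^2(0,T;H^{\frac{1}{2}}(\partial\Omega))$. You write that ``since the two time-domain maps agree, so do their Laplace transforms,'' but the Laplace transform requires the Neumann traces on all of $(0,\infty)$, and nothing in your argument passes from the finite window $(0,T)$ to the half-line. This is precisely what the paper's Section 2 (Theorem \ref{thm-analy}) exists to provide: under the hypotheses $\lambda(0)=\lambda'(0)=0$ and analytic extendability of $\lambda$ to the sector $\Omega_{\theta}$ --- hypotheses your proposal never actually uses for this purpose --- the solution is shown, via the iteration scheme (\ref{def-induct-u_n}) and the Mittag-Leffler estimates, to have $Au(t)$ analytically extendable in $t$ to $\Omega_{\theta}$; hence the Neumann traces are analytic in $t>0$, and their equality on $(0,T)$ propagates to $(0,\infty)$ by unique continuation of analytic functions. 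Without this, the $s$-parametrized elliptic data feeding the Calder\'on argument is simply not available from the given measurement. A smaller inaccuracy: the bound $|u(x,t)|\le Ce^{C_0t}$ needed for the transform does not come from ``energy estimates'' but from the maximum principle (estimate (\ref{esti-u_C})), which is exactly where the sign condition $p\le 0$ enters; your separate use of the sign conditions to make each frozen-$s$ elliptic problem uniquely solvable is correct and corresponds to the paper's identification $\widetilde v_j(g)=v_j(g)$.
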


The assumption $p\le 0$ on $\overline{\Omega}$ is necessary for proving
that $\vert u(x,t)\vert = O(e^{C_1t})$ as $t \to \infty$ with some
constant $C_1>0$. Such an estimate is sufficient for taking the Laplace
transforms of $u$, which is a key of the proof of Theorem 1.1.
In this paper, we do not discuss the inverse problem without the condition
$p\le 0$.
 
In $\mathcal{U}$, we can relax the regularity of $p, p_1, ...., 
p_{\ell}$ but we do not discuss here.
Moreover, in the two dimensional case of $d=2$, thanks to Imanuvilov and 
Yamamoto \cite{IY}, we can prove a sharp uniqueness result
where Dirichlet inputs and Neumann outputs can be restricted an arbitrary
subboundary and the required regularity of unknown coefficients is relaxed.

\begin{cor}\label{buharik}
Let $\Omega\subset \Bbb R^2$ be a bounded domain with smooth boundary 
$\ppp\Omega$ and $\Gamma \subset \ppp\Omega$ be an arbitrarily given 
subboundary and let $\gamma >2$ be arbitrarily fixed.
We assume the $\lambda$ satisfies the same conditions as in Theorem \ref{thm-uniqu}. 
We set
$$
\mathcal{\widehat{U}} = \{(\ell, \vec{\alpha}, p_1, ..., p_{\ell}, p)
\in \Bbb N \times (0,1)^{\ell} \times W^{2,\infty}(\Omega) \times
(W^{1,\gamma}(\Omega))^{\ell};
\thinspace p_j \ge 0, p_j\not\equiv 0, \thinspace
j=2,3,..., \ell, \thinspace p_1 > 0, \thinspace p\le 0 
\thinspace \mbox{on $\overline\Omega$}\}.
$$
If $(\ell,\vec{\alpha},p_j,p), (m,\vec{\beta},q_j,q) \in \mathcal{\widehat U}$ 
satisfy 
$$
\Lambda(\ell,\vec{\alpha},p_j,p)g = \Lambda(m,\vec{\beta},q_j,q)g \quad
\mbox{on $\Gamma$}
$$
for all $g \in H^{\frac{3}{2}}(\ppp\Omega)$ with 
supp $g \subset \Gamma$, then $\ell=m$, $\vec{\alpha}=\vec{\beta}$, $p_j=q_j$, 
$1\le j\le \ell$ and $p=q$.
\end{cor}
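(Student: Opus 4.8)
The plan is to reproduce the Laplace-transform reduction behind Theorem~\ref{thm-uniqu}, but to replace the full-boundary elliptic uniqueness by the two-dimensional partial-data result of Imanuvilov and Yamamoto \cite{IY}. Let $u$ and $v$ denote the solutions of \eqref{equ-u} associated with $(\ell,\vec\alpha,p_j,p)$ and $(m,\vec\beta,q_j,q)$, respectively, for a boundary input $\lambda(t)g(x)$ with $g\in H^{\frac{3}{2}}(\ppp\Omega)$ and $\mathrm{supp}\,g\subset\Gamma$. Since $p,q\le 0$ and $\lambda$ obeys the stated exponential bound, one has $\vert u(x,t)\vert,\vert v(x,t)\vert=O(e^{C_1t})$, so the Laplace transforms $\widehat u(x,s)=\int_0^\infty e^{-st}u(x,t)\,dt$ and $\widehat v(x,s)$ are well defined and analytic in $s$ for $\mathrm{Re}\,s$ large.

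Because $u(x,0)=0$, the Caputo derivative transforms as $\widehat{\ppp_t^{\alpha_j}u}=s^{\alpha_j}\widehat u$, so $\widehat u(\cdot,s)$ solves
\begin{equation*}
\Delta\widehat u(x,s)+\Big(p(x)-\sum_{j=1}^{\ell}p_j(x)s^{\alpha_j}\Big)\widehat u(x,s)=0,\quad x\in\Omega,
\end{equation*}
with Dirichlet datum $\widehat\lambda(s)g$ on $\ppp\Omega$, and $\widehat v(\cdot,s)$ solves the analogous equation with potential $q-\sum_k q_k s^{\beta_k}$. For $s>0$ these potentials are $\le 0$, so $0$ is not a Dirichlet eigenvalue and the elliptic Cauchy data on $\Gamma$ are well defined. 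The zeros of the analytic function $\widehat\lambda$ are isolated, hence for all but a discrete set of $s>C_1$ we may divide by $\widehat\lambda(s)\neq 0$; by injectivity of the Laplace transform the assumed equality of Neumann traces on $\Gamma$ for every $g$ with $\mathrm{supp}\,g\subset\Gamma$ becomes the statement that the two elliptic problems share the same partial Cauchy data on $\Gamma$.

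Next I would invoke \cite{IY}: in dimension $d=2$, equality of the partial Cauchy data on an arbitrary subboundary $\Gamma$ determines the zeroth-order coefficient. This is exactly where $\gamma>2$ is used, since $p_j\in W^{1,\gamma}(\Omega)$ and $p\in W^{2,\infty}(\Omega)$ place the potential $p-\sum_j p_j s^{\alpha_j}$ in $W^{1,\gamma}(\Omega)$ with $\gamma>2=d$, the class covered by the two-dimensional partial-data theory. Applying it at each admissible $s$ and using continuity in $s$ to fill in the discrete exceptional set yields
\begin{equation*}
p(x)-\sum_{j=1}^{\ell}p_j(x)s^{\alpha_j}=q(x)-\sum_{k=1}^{m}q_k(x)s^{\beta_k},\quad x\in\overline\Omega,\ s>C_1.
\end{equation*}
Fixing $x$ and using the linear independence of the functions $s\mapsto s^{\gamma}$ for distinct real exponents $\gamma$ on the ray $s>C_1$, I would then match coefficients: the constant-in-$s$ terms give $p=q$, and since $p_j,q_k\not\equiv 0$ each exponent $\alpha_j$ must coincide with some $\beta_k$ and conversely, which forces $\ell=m$, $\vec\alpha=\vec\beta$ under the fixed ordering, and finally $p_j=q_j$.

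The step I expect to be most delicate is not this separation of exponents, which is formal, but re-establishing the analytic machinery under the weakened regularity of $\mathcal{\widehat U}$: one must verify well-posedness of \eqref{equ-u} and the bound $\vert u(x,t)\vert=O(e^{C_1t})$ with $p\in W^{2,\infty}$ and $p_j\in W^{1,\gamma}$ in place of $C^\infty$, and confirm that the Laplace-transformed potentials genuinely satisfy the hypotheses of \cite{IY}. A secondary point needing care is the partial-data bookkeeping, namely that restricting inputs to $g$ with $\mathrm{supp}\,g\subset\Gamma$ and reading Neumann traces only on $\Gamma$ still yields the complete partial Cauchy data that \cite{IY} requires; once these are in hand, \cite{IY} closes the argument.
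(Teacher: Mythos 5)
Your proposal follows exactly the paper's route: the paper proves this corollary by observing that the argument is word-for-word the proof of Theorem \ref{thm-uniqu} (Laplace transform in $t$, reduction to the Schr\"odinger equation with potential $P_s$, division by $(L\lambda)(s)$ off its discrete zero set, then matching exponents $s^{\alpha_j}$), with the Sylvester--Uhlmann uniqueness \cite{SU} replaced by the two-dimensional partial-data result \cite{IY} --- precisely your plan, including your correct identification of where $\gamma>2$ enters. One minor slip worth noting: in $\mathcal{\widehat{U}}$ the tuple ordering $(\ell,\vec{\alpha},p_1,\dots,p_\ell,p)$ puts $p_1$ (not $p$) in $W^{2,\infty}(\Omega)$ --- this is what the forward problem's divergence-form rewriting with $B=\nabla(-1/p_1)$ requires --- while $p,p_2,\dots,p_\ell\in W^{1,\gamma}(\Omega)$; since $W^{2,\infty}(\Omega)\subset W^{1,\gamma}(\Omega)$ on a bounded domain, the potential $P_s$ still lies in $W^{1,\gamma}(\Omega)$ with $\gamma>2$, so the application of \cite{IY} goes through as you describe.
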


As for single-term time-fractional diffusion equations, 
there are not many works on inverse problems in spite of the physical 
and practical importance and see e.g., Cheng, Nakagawa, Yamamoto 
and Yamazaki \cite{CNYY}, Li, Zhang, Jia and Yamamoto \cite{LZJY},
Hatano, Nakagawa, Wang and Yamamoto \cite{HNWY}.
Moreover for inverse problems for 
multi-term time-fractional diffusion equations, to the best knowledge of 
the authors, there are no existing results.

The rest of the paper is organized as follows.  In Section 2, we prove
properties of solutions to (\ref{equ-u}) which are necessary for the proof of
Theorem \ref{thm-uniqu}.  In particular, the $t$-analyticity of solution is essential.
In Section 3, by applying the Laplace transforms of the solutions to (\ref{equ-u})
and reducing our inverse problem to the inverse boundary value problem for
elliptic equations, we complete the proof of Theorem \ref{thm-uniqu}.

\section{Forward problem}
For $\theta \in \left(0, \frac{\pi}{2}\right)$ and $T>0$, we set
$$
\Omega_{\theta} :=\{z\in\mathbb{C};\thinspace z\neq 0, \thinspace
|\arg z|<\theta\},\quad 
\Omega_{\theta,T} :=\{z\in \Omega_{\theta};\thinspace
\vert z\vert < T\}.
$$

In this section, we establish the analyticity of the solution $u$ to
the initial- boundary value problem (\ref{equ-u}) as well as the unique
existence of the solution.  As for other results for solutions to (\ref{equ-u}), see 
Beckers and Yamamoto \cite{BY}, Li and Yamamoto \cite{LY},
Li, Liu and Yamamoto \cite{LLY}, Luchko \cite{Lu2009}, \cite{Luchko2}
for example.

\begin{thm}\label{thm-analy}
Let $(\ell,\vec{\alpha},p_j,p) \in \mathcal{U}$ and $T>0$ be arbitrarily given.
Assume that
$g \in H^{\frac{3}{2}}(\ppp\Omega)$, $\lambda(0)=0$, 
$\lambda'(0) = 0$, 
for $\theta\in (0,\frac\pi 2)$ and $T>0$, the function $\lambda(t)$ can be 
analytically extended to $\Omega_{\theta}$ and
$\lambda \in W^{2,\infty}(\Omega_{\theta,T})$.
Then there exists a unique mild solution $u\in C([0,\infty);H^2(\Omega))$ and
$Au(t)$: $(0,\infty)\rightarrow H^2(\Omega)$ can be analytically extended to
$\Omega_{\theta}$.

Moreover, for $g \in C^{\infty}(\ppp\Omega)$ and any $T>0$, we have  
\begin{equation}\label{esti-u_C}
\Vert u\Vert_{C(\overline{\Omega}\times [0,T])} 
\le \Vert g\Vert_{C(\ppp\Omega)}\Vert \lambda\Vert_{C[0,T]}.
\end{equation}
\end{thm}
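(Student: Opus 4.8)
The plan is to prove existence, uniqueness, analyticity, and the maximum-principle-type estimate for the forward problem (\ref{equ-u}). The first step is to homogenize the boundary condition. Since $g \in H^{3/2}(\partial\Omega)$, I can choose a harmonic (or simply smooth) lift $G \in H^2(\Omega)$ with $G|_{\partial\Omega} = g$, and write $u(x,t) = \lambda(t)G(x) + w(x,t)$, so that $w$ has homogeneous Dirichlet data and satisfies a reformulated equation with a source term $F(x,t)$ built from $\lambda$, its fractional derivatives, and $(\Delta + p)G$. The operator $A := -\Delta - p$ with domain $H^2(\Omega)\cap H^1_0(\Omega)$ is self-adjoint; because $p \le 0$, $A$ is nonnegative, with a discrete spectrum $0 \le \mu_1 \le \mu_2 \le \cdots \to \infty$ and an orthonormal eigenbasis $\{\varphi_n\}$ in $L^2(\Omega)$.

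Next I would represent the solution via eigenfunction expansion. Projecting onto each $\varphi_n$ reduces the PDE to a multi-term fractional ODE of the form
\begin{equation*}
\sum_{j=1}^{\ell} p_j^{(n)}\partial_t^{\alpha_j} w_n(t) + \mu_n w_n(t) = F_n(t),
\end{equation*}
which I would solve through the multinomial Mittag-Leffler function (as in Luchko and Gorenflo, or Li--Liu--Yamamoto). Concretely, the solution operator is built from kernels expressed via the Mittag-Leffler functions, and the mild solution $w$ is obtained by a Duhamel-type convolution of these kernels against $F$. The known asymptotic and analyticity properties of the Mittag-Leffler functions on sectors are the crucial input: they allow each modal solution, and hence the full series, to be continued analytically in $t$ into the sector $\Omega_\theta$. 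The assumptions $\lambda(0)=0$, $\lambda'(0)=0$ match the smoothness needed for the source $F$ and guarantee that the fractional derivatives of $\lambda G$ are well behaved near $t=0$, so that $u \in C([0,\infty);H^2(\Omega))$ and $Au$ extends analytically to $\Omega_\theta$.

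I expect the main obstacle to be establishing uniform convergence of the eigenfunction series in the $H^2(\Omega)$ norm together with the analytic continuation, since one must control the Mittag-Leffler kernels uniformly in $n$ as $\mu_n \to \infty$ and simultaneously handle the several fractional orders $\alpha_j$; the multinomial Mittag-Leffler function's decay estimates on sectors are delicate and must be invoked carefully to dominate the series and justify termwise differentiation and continuation. For the pointwise bound (\ref{esti-u_C}), I would instead appeal to a maximum principle for the multi-term time-fractional operator: assuming $g \in C^\infty(\partial\Omega)$, the comparison function $\|g\|_{C(\partial\Omega)}\|\lambda\|_{C[0,T]}$ is an upper barrier since $p \le 0$ and the boundary/initial data are dominated by it, and an extremum-principle argument (Luchko's weak maximum principle for multi-term fractional diffusion) then yields the estimate. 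The subtlety here is that the classical maximum principle requires sufficient regularity of $u$ up to the boundary, which the $C^\infty$ assumption on $g$ and the analyticity already established are designed to supply.
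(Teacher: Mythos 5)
Your homogenization step and your treatment of the bound (\ref{esti-u_C}) are in line with the paper, but the core of your existence/analyticity argument has a genuine gap: the eigenfunction expansion does not decouple. The coefficients $p_j$ in $\mathcal{U}$ are spatially varying, so projecting $\sum_{j}p_j(x)\partial_t^{\alpha_j}w$ onto an eigenfunction $\varphi_n$ of your operator $-\Delta-p$ produces $\sum_{k}(\partial_t^{\alpha_j}w_k)\,(p_j\varphi_k,\varphi_n)_{L^2(\Omega)}$, a fully coupled infinite system, not the scalar modal equation $\sum_j p_j^{(n)}\partial_t^{\alpha_j}w_n+\mu_n w_n=F_n$ you wrote down. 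The multinomial Mittag--Leffler representation (Luchko--Gorenflo, Li--Liu--Yamamoto) applies precisely when all the $p_j$ are constants; for variable $p_j$ there is no such closed-form modal solution, so the sectorial decay and analyticity estimates for the multinomial kernels, which you identify as the crucial input, never become applicable. Note that even the leading term $p_1(x)\partial_t^{\alpha_1}u$ already ruins the reduction, which is why the paper begins by dividing the equation through by $p_1$.

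The paper's actual route is perturbative: after dividing by $p_1$ it takes $A=-\mathrm{div}(p_1^{-1}\nabla\,\cdot)$ and treats $-\sum_{j=2}^{\ell}\widetilde{p}_j\partial_t^{\alpha_j}\widetilde{u}+B\cdot\nabla\widetilde{u}+b\widetilde{u}$ as a source for the \emph{single-order} $\alpha_1$ evolution, governed by the operators $S(z)$, $S'(z)$, $S''(z)$ built from the one-parameter Mittag--Leffler function of order $\alpha_1$ alone, with the sectorial bounds (\ref{esti-S})--(\ref{esti-S''}). Even if you switched to this perturbative viewpoint, your sketch would still face a second obstruction you do not address: the Duhamel term $\int_0^t A^{-1}S'(t-s)\widetilde{p}_j\partial_t^{\alpha_j}\widetilde{u}(s)\,ds$ contains $d\widetilde{u}/dr$ through the Caputo derivative, whereas the fixed-point scheme can only be closed in the norm $\Vert A\widetilde{u}(z)\Vert_{L^2(\Omega)}$, with no control of time derivatives of the unknown. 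The paper removes the derivative by two careful integrations by parts (using $\widetilde{u}(0)=0$ and the splitting into $I_1$, $I_2$), rewriting everything against kernels in $S'$ and $S''$ acting on $\widetilde{u}$ itself; only then do the changes of variables and the Picard iteration with the estimate $\Vert A\widetilde{u}_{n+1}(z)-A\widetilde{u}_n(z)\Vert_{L^2(\Omega)}\le M_1(CT^{\alpha_0}\Gamma(\alpha_0))^n/\Gamma(n\alpha_0+1)$ deliver uniform convergence and analyticity on $\Omega_{\theta,T}$. Your maximum-principle argument for (\ref{esti-u_C}) is essentially the paper's (Luchko's Theorem 2 adapted from constant to variable coefficients under the sign conditions, plus an approximation step), but the existence and analyticity part needs to be rebuilt along these lines.
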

\begin{proof}
The proof is based on the following observation. By  the Sobolev extension 
theorem,  $g\in H^{\frac{3}{2}}(\partial\Omega)$ allows us to choose 
$\widetilde{g}\in H^2(\Omega)$ such that $\widetilde{g}|_{\partial\Omega}=g$.
Introducing the new unknown function $\widetilde{u}(x,t)
= u(x,t)-\lambda(t)\widetilde{g}(x)$, we can rewrite 
(\ref{equ-u}) as
\begin{equation}\label{equ-u'}
\left\{\begin{array}{*{20}c}
\partial_t^{\alpha_1}\widetilde{u}+\sum_{j=2}^{\ell} 
\widetilde{p}_j(x)\partial^{\alpha_j}_t \widetilde{u}
= {\rm div}(\frac{1}{p_1(x)}\nabla\widetilde{u})+B(x)\cdot\nabla\widetilde{u} 
+ b(x)\widetilde{u} +F(x,t),\mbox{ in } \Omega\times(0,T),\hfill\\
  {\widetilde{u}(x,0)=0,\quad x\in \Omega,} \hfill \\
  {\widetilde{u}\vert_{\partial\Omega} = 0, \quad 0 <  t < T,} \hfill
 \end{array} \right.
 \end{equation}
where $\www{p}_j(x):=\frac{p_j(x)}{p_1(x)}$, $j=2,\cdots,\ell$, 
$B(x):=\nabla(\frac{-1}{p_1(x)})$, 
$b(x):=\frac{p(x)}{p_1(x)}$ and
\begin{equation}
  F(x,t)
:=\frac{1}{p_1(x)}\left(\lambda(t)\Delta\www{g}(x) 
+ \lambda(t)p(x)\www{g}(x) 
- \sum_{j=1}^{\ell}(\ppp_t^{\alpha_j}\lambda)(t)p_j(x)\www{g}(x)\right).
\end{equation}
Then $F(x,\cdot)$ can be analytically extended to
$\Omega_{\theta}$.  In fact, it is sufficient to prove that
$\ppp^{\alpha}_t\lambda$ can be analytically extended to 
$\Omega_{\theta, T}$ with any $\alpha \in (0,1)$ and $T>0$.
Let $z \in \Omega_{\theta,T}$ be arbitrarily fixed.
We set 
$$
\lambda_{\alpha}(z) := \frac{z^{-\alpha-1}}{\Gamma(1-\alpha)}
\int^1_0 (1-\eta)^{-\alpha}\frac{\ppp(\lambda(\eta z))}{\ppp\eta}d\eta
= \frac{1}{\Gamma(1-\alpha)}
\int^z_0 (z-s)^{-\alpha}\frac{d\lambda(s)}{ds}ds.
$$
Here the integral is considered on the segment from $0$ to
$z$ in $\mathbb{C}$.  Then we can see that $\lambda_{\alpha}(t)
= \ppp_t^{\alpha}\lambda(t)$ for $t>0$.  For any small $\ep>0$, we set
$$
\lambda_{\alpha}^{\ep}(z) =
 \frac{z^{-\alpha-1}}{\Gamma(1-\alpha)}
\int^{1-\ep}_0 (1-\eta)^{-\alpha}\frac{\ppp(\lambda(\eta z))}{\ppp\eta}
d\eta.
$$
By the analyticity of $\lambda$ in $\Omega_{\theta}$, we see that
$\lambda_{\alpha}^{\ep}$ is analytic in $\Omega_{\theta,T}$.
Let $\ep_0>0$ be an arbitrarily fixed small constant.
For any $z \in \Omega_{\theta,\ep_0,T}
:=\{z\in\Omega_{\theta,T}; |z|>\ep_0\}$, we have
$$
| \lambda_{\alpha}^{\ep}(z) - \lambda_{\alpha}(z)|
\le \frac{\ep_0^{-\alpha-1}}{\Gamma(1-\alpha)}
\int^1_{1-\ep} (1-\eta)^{-\alpha}\left|
\frac{\ppp(\lambda(\eta z))}{\ppp\eta}\right| d\eta
\le \frac{\ep_0^{-\alpha-1}}{\Gamma(1-\alpha)}
\sup_{0<\eta<1}
\left| \frac{\ppp(\lambda(\eta z))}{\ppp\eta}\right|
\int^1_{1-\ep} (1-\eta)^{-\alpha} d\eta.
$$
Therefore, for any fixed $\ep_0>0$ and $T>0$, we see that
$$
\sup_{z \in \Omega_{\theta,\ep_0,T}} | \lambda_{\alpha}
^{\ep}(z) - \lambda_{\alpha}(z)| \longrightarrow 0
\quad\mbox{as}\,\, \ep \to 0.
$$  
Since $\lambda_{\alpha}^{\ep}$ is analytic in
$\Omega_{\theta,T}$, we see that $\lambda_{\alpha}$ is analytic in
$\Omega_{\theta,T}$, because $\lambda_{\alpha}$ is the uniform convergent
limit of analytic $\lambda_{\alpha}^{\ep}$ in any compact subset of
$\Omega_{\theta,T}$.
Thus we completed the proof that $F(\cdot,t)$ can be analytically extended
to $\Omega_{\theta}$.

Next we estimate $F$.  Let $T\ge 1$.  First we have
$$
\Vert F\Vert_{L^{\infty}(0,T;L^2(\Omega))}  
\le C\left(\Vert \lambda\Vert_{L^{\infty}(0,T)}
+ \sum_{j=1}^{\ell} \Vert \ppp_t^{\alpha_j}\lambda\Vert_{L^{\infty}(0,T)}
\right).
$$
Here and henceforth $C>0$ denotes a generic constant which is
independent of $T, t>0$, $z\in \Omega_{\theta}$, but dependent on 
$d, \Omega, g, \theta, p, p_1, ..., p_{\ell}, \alpha_1, ..., \alpha_{\ell}$.  
We have
$$
\vert \ppp_t^{\alpha_j}\lambda(t)\vert 
= \frac{1}{\Gamma(1-\alpha_j)}\left\vert 
\int^t_0 (t-s)^{-\alpha_j}\frac{d\lambda}{ds}(s) ds\right\vert
\le C\int^t_0 (t-s)^{-\alpha_j} ds \Vert \lambda\Vert_{C^1[0,T]}
\le CT\Vert \lambda\Vert_{C^1[0,T]},
$$
and so 
$$
\Vert F\Vert_{L^{\infty}(0,T;L^2(\Omega))}
\le CT\Vert \lambda\Vert_{C^1[0,T]}.
$$
Moreover, by $0 < \alpha_j < 1$, $\lambda'(0) = 0$ and 
integration by parts yield
\begin{eqnarray*}
&& \ppp_t^{\alpha_j}\lambda(t) 
= \frac{1}{\Gamma(1-\alpha_j)}\int^t_0 (t-s)^{-\alpha_j}
\frac{d\lambda}{ds}(s) ds\\
=&& \frac{1}{\Gamma(1-\alpha_j)}\left(
\left[ \lambda'(s)\frac{(t-s)^{1-\alpha_j}}{1-\alpha_j}
\right]^{s=0}_{s=t}
+ \int^t_0 \frac{(t-s)^{1-\alpha_j}}{1-\alpha_j}
\frac{d^2\lambda}{ds^2}(s) ds\right)\\
=&& \frac{1}{\Gamma(1-\alpha_j)}
\int^t_0 \frac{(t-s)^{1-\alpha_j}}{1-\alpha_j}
\frac{d^2\lambda}{ds^2}(s) ds.
\end{eqnarray*}
Therefore
$$
\ppp_t\ppp_t^{\alpha_j}\lambda(t) 
= \frac{1}{\Gamma(1-\alpha_j)}\int^t_0 (t-s)^{-\alpha_j}
\frac{d^2\lambda}{ds^2}(s) ds,
$$
and so
$$
\Vert \ppp_t\ppp_t^{\alpha_j}\lambda\Vert_{L^{\infty}(0,T)}
\le C\int^t_0 (t-s)^{-\alpha_j}ds \Vert \lambda\Vert_{C^2[0,T]}
\le CT\Vert \lambda\Vert_{C^2[0,T]}.
$$
Hence $\Vert \ppp_tF\Vert_{L^{\infty}(0,T;L^2(\Omega))}
\le CT\Vert \lambda\Vert_{C^2[0,T]}$.  Consequently
\begin{equation}
\Vert F\Vert_{W^{1,\infty}(0,T;L^2(\Omega))}
\le CT\Vert \lambda\Vert_{C^2[0,T]}
\end{equation}
for all $T\ge 1$.  Next we estimate $\Vert F(\cdot, z)\Vert_{L^2(\Omega)}$
for $z \in \Omega_{\theta,T}$.  Noting that 
$\www\lambda(\eta) = \lambda(\eta z)$ and 
$\frac{d\widetilde\lambda}{d\eta}(\eta) = z\frac{d\lambda}{d\eta}(\eta z)$
for $0 < \eta < 1$ and $z\in \Omega_{\theta,T}$, and 
$$
\lambda_{\alpha_j}(z) := \frac{1}{\Gamma(1-\alpha_j)}\int^z_0 (z-s)^{-\alpha_j}
\frac{d\lambda}{ds}(s) ds
= \frac{z^{-\alpha_j}}{\Gamma(1-\alpha_j)}\int^1_0 (1-\eta)^{-\alpha_j}
\frac{d\widetilde\lambda}{d\eta}(\eta) d\eta
$$
we have
\begin{eqnarray*}
&& \Vert \lambda_{\alpha_j}\Vert_{L^{\infty}(\Omega_{\theta,T})}
\le C\vert z\vert^{-\alpha_j}\int^1_0 (1-\eta)^{-\alpha_j} d\eta
\vert z\vert \sup_{s \in [0,z]} \left\vert \frac{d\lambda}{ds}(s)
\right\vert\\
\le && C\vert z\vert^{1-\alpha_j}\Vert \lambda\Vert_{W^{1,\infty}
(\Omega_{\theta,T})} 
\le CT\Vert \lambda\Vert_{W^{1,\infty}(\Omega_{\theta,T})}.
\end{eqnarray*}
Here $[0,z]$ denotes the closed segment in $\Bbb C$ from $0$ to $z$.
Arguing similarly to the case of $t \in [0,T]$, we obtain
\begin{equation}
\Vert F\Vert_{W^{1,\infty}(\Omega_{\theta,T};L^2(\Omega))}
\le CT\Vert \lambda\Vert_{W^{2,\infty}(\Omega_{\theta,T})}.
\end{equation}

Next we define operator $A$ in $H^2(\Omega)\cap H_0^1(\Omega)$ to be
\begin{equation*}
 (A\psi)(x)=-{\rm div}(\tfrac{1}{p_1(x)}\nabla\psi(x)),\quad x\in\Omega,\quad
 \psi\in H^2(\Omega)\cap H_0^1(\Omega).
\end{equation*}
Here and henceforth $\{\lambda_k,\phi_k\}_{k=1}^{\infty}$ denotes the 
eigensystem of the elliptic operator $A$ such that
$0<\lambda_1<\lambda_2\leq \lambda_3\cdots,\ \lim_{k\rightarrow\infty}
\lambda_k=\infty$, $A\phi_k=\lambda_k\phi_k$ 
and $\{\phi_k\}\subset H^2(\Omega)\cap H_0^1(\Omega)$ 
forms an orthonormal basis of $L^2(\Omega)$.
Then we can define the fractional power $A^{\gamma}$ for $\gamma > 0$ of
the operator $A$ (e.g., Tanabe \cite{Ta}), and we see that
$$
D(A^{\gamma})
= \left\{\psi\in L^2(\Omega): \sum_{n=1}^{\infty} \lambda_n^{2\gamma} |
(\psi,\phi_n)_{L^2(\Omega)}|^2<\infty\right\}
$$
is a Hilbert space with the norm
$$
\|\psi\|_{D(A^{\gamma})}=\left( \sum_{n=1}^{\infty} \lambda_n^{2\gamma} |
(\psi,\phi_n)_{L^2(\Omega)}|^2 \right)^{\frac{1}{2}}.
$$
We further define the operator $S(t):L^2(\Omega)\rightarrow L^2(\Omega)$ 
for $t>0$ by
\begin{equation}\label{def-S}
S(t)a:=\sum_{n=1}^{\infty} (a,\phi_n)_{L^2(\Omega)} 
E_{\alpha_1,1}(-\lambda_nt^{\alpha_1}) \phi_n \mbox{ in } L^2(\Omega)
\end{equation}
for $a\in L^2(\Omega)$, where $E_{\alpha,\beta}(z)$ is Mittag-Leffler 
function defined by
$$
E_{\alpha,\beta}(z):=\sum_{k=0}^{\infty}\frac{z^k}{\Gamma(\alpha k+\beta)},
\ z\in\mathbb{C}, \mbox{ $\alpha>0$, $\beta\in\mathbb{R}$.}
$$
The above formula and the classical asymptotics
\begin{equation}\label{esti-Gamma}
\Gamma(\eta) = e^{-\eta}\eta^{\eta-\frac{1}{2}}(2\pi)^{\frac{1}{2}}
\left(1 + O\left(\frac{1}{\eta}\right)\right) \quad \mbox{as}\quad \eta
\rightarrow +\infty, \thinspace \eta > 0
\end{equation}
(e.g., Abramowitz and Stegun \cite{AS}, p.257) imply that
the radius of convergence is $\infty$ and so
$E_{\alpha,\beta}(z)$ is an entire function of $z\in\mathbb{C}$.
 
Moreover the term-wise differentiations are possible and give
\begin{align*}
S'(t)a
&:=-\sum_{n=1}^{\infty}\lambda_n(a,\phi_n)_{L^2(\Omega)}
t^{\alpha_1-1}E_{\alpha_1,\alpha_1}(-\lambda_nt^{\alpha_1})\phi_n\
\text{in}\ L^2(\Omega)\\
S''(t)a
&:=-\sum_{n=1}^{\infty}\lambda_n(a,\phi_n)_{L^2(\Omega)}t^{\alpha_1-2}
E_{\alpha_1,\alpha_1-1}(-\lambda_nt^{\alpha_1})\phi_n\
\text{in}\ L^2(\Omega)
\end{align*}
for $a\in L^2(\Omega)$, $t>0$ (e.g., Podlubny \cite{Podlubny}).

From the definition of (\ref{def-S}) and the property of Mittag-Leffler 
function, $S'(z)$ and $S''(z)$ are analytic in the sector $\Omega_{\theta}$
and by Theorem 1.6 in \cite{Podlubny} (p.35), we can prove that there exists a 
constant $C>0$, which is independent of $z$ such that
\begin{align}
\Vert S(z)\Vert_{L^2(\Omega)\rightarrow L^2(\Omega)}&\leq C, 
\quad z\in \Omega_{\theta}, \label{esti-S}\\
\|A^{\gamma-1}S'(z)\|_{L^2(\Omega)\rightarrow L^2(\Omega)}&\leq 
C |z|^{\alpha_1-1-\alpha_1 \gamma},\quad z\in \Omega_{\theta},
\quad 0\leq\gamma\leq 1, \label{esti-S'}\\
\|A^{\gamma-1}S''(z)\|_{L^2(\Omega)\rightarrow L^2(\Omega)}
&\leq C |z|^{\alpha_1-2-\alpha_1 \gamma},\quad z\in \Omega_{\theta},
\quad 0\leq\gamma\leq 1 \label{esti-S''}.
\end{align}
We verify only (\ref{esti-S}), because the proofs of (13) and (14) are 
similar.
In fact, an estimate of $E_{\alpha,1}(-t)$ (e.g., Theorem 1.6 in 
\cite{Podlubny}, p.35) implies that (\ref{def-S}) holds for 
$t \in \Omega_{\theta}$ and
we have
\begin{eqnarray*}
&& \Vert S(z)a\Vert_{L^2(\Omega)}^2 
= \sum_{n=1}^{\infty} (a,\phi_n)_{L^2(\Omega)}^2
\vert E_{\alpha,1}(-\lambda_nz^{\alpha_1})\vert^2\\
\le&& C\sum_{n=1}^{\infty} (a,\phi_n)_{L^2(\Omega)}^2
\frac{1}{1+\vert \lambda_nz^{\alpha_1}\vert}
\le C \sum_{n=1}^{\infty} (a,\phi_n)_{L^2(\Omega)}^2 
= C\Vert a\Vert_{L^2(\Omega)}^2,
\end{eqnarray*}
which proves (\ref{esti-S}).

In view of $1>\alpha_1> ... > \alpha_{\ell} > 0$, regarding
$-\sum_{j=2}^{\ell} \widetilde{p}_j\ppp_t^{\alpha_j}\widetilde u
+ B\cdot \nabla \widetilde u + b\widetilde u$ as non-homogeneous term
in (\ref{equ-u'}), and we have
\begin{align*}
\www{u}(t) 
=&-\int^t_0 A^{-1}S'(t-s)(B\cdot \nabla \www{u}(s)
  + b\www{u}(s) + F(s)) ds\\
+& \sum^{\ell}_{j=2} \int^t_0 A^{-1}S'(t-s)\www{p}_j\ppp_t^{\alpha_j}
\www{u}(s) ds.
\end{align*}
Now we calculate the right-hand side.
Noting by the definition of Caputo fractional 
derivative \label{def-Caputo} and the Fubini theorem,
similarly to \cite{BY} or \cite{SY}, we change the orders of integrations 
to derive
\begin{align*}
&\int^t_0 A^{-1}S'(t-s) 
    \Big(\widetilde{p}_j\ppp_t^{\alpha_j}\widetilde u(s)\Big)ds
=\int_0^t A^{-1}S'(t-s) \frac{1}{\Gamma(1-\alpha_j)}
\left(\int_0^s (s-r)^{-\alpha_j} \www{p}_j\frac{d\www{u}}{dr}(r)dr\right) ds\\
=&\int_0^t \left(\int_r^t A^{-1}S'(t-s) 
     \frac{(s-r)^{-\alpha_j}}{\Gamma(1-\alpha_j)} ds \right) 
     \www{p}_j\frac{d\www{u}}{dr}(r)dr\\
=&\int_0^t \left(\int_0^{t-r} A^{-1}S'(t-r-\xi) 
     \frac{\xi^{-\alpha_j}}{\Gamma(1-\alpha_j)} d\xi \right) 
     \www{p}_j\frac{d\www{u}}{dr}(r)dr.
\end{align*}
Here in the last equality, we used the change of variable $\xi:=s-r$. 
Decomposing the integrand, we obtain
\begin{align*}
&\int_0^t \left(\int_0^{t-r} A^{-1}S'(t-r-\xi) 
     \xi^{-\alpha_j} d\xi \right) \www{p}_j \frac{d\www{u}}{dr}(r)dr\\
=&\int_0^t \left(\int_0^{t-r} A^{-1}S'(t-r-\xi)
     \left(\xi^{-\alpha_j} - (t-r)^{-\alpha_j}\right) d\xi \right) 
     \www{p}_j\frac{d\www{u}}{dr}(r)dr\\
&+\int_0^t \left(\int_0^{t-r} A^{-1}S'(t-r-\xi) d\xi \right) (t-r)^{-\alpha_j}
     \www{p}_j\frac{d\www{u}}{dr}(r)dr=:I_1(t)+I_2(t).
\end{align*}
Here we should understand that 
$$
I_1(t) = \lim_{\ep_1, \ep_2 \downarrow 0}\int^t_0
\left(\int^{t-r-\ep_1}_{\ep_2} 
A^{-1}S'(t-r-\xi)\left(\xi^{-\alpha_j} - (t-r)^{-\alpha_j}\right) d\xi \right) 
\www{p}_j\frac{d\www{u}}{dr}(r)dr,
$$
but throughout the following calculations, we can prove that the resulting 
integrals are all convergent, so that we present the calculations
without such passage to limits.

Integration by parts yields
\begin{align*}
I_1(t)
=& \int_0^{t-r} A^{-1}S'(t-r-\xi)
\left(\xi^{-\alpha_j} - (t-r)^{-\alpha_j}\right)\www{p}_j \www{u}(r)d\xi
\Big|_{r=0}^{r=t}\\
&+ \int_0^t \left(\int_0^{t-r} A^{-1}S''(t-r-\xi)
\left(\xi^{-\alpha_j} - (t-r)^{-\alpha_j}\right) d\xi \right) 
\www{p}_j\www{u}(r)dr\\
&+ \alpha_j\int_0^t \left(\int_0^{t-r} A^{-1}S'(t-r-\xi)d\xi\right)
    (t-r)^{-\alpha_j-1} \www{p}_j \www{u}(r)dr\\
&+ \int_0^t \lim_{\xi\to t-r} A^{-1}S'(t-r-\xi)
     \left(\xi^{-\alpha_j} - (t-r)^{-\alpha_j}\right)\www{p}_j \www{u}(r)dr.
\end{align*}
%when we will
%estimate (\ref{esti-induct-Au_n}) later.
From (\ref{esti-S'}) and the fact $\alpha_1>\alpha_j$ for
$j=2,\cdots,\ell$, we deduce
\begin{align*}
&\left\|\int_0^{t-r} A^{-1}S'(t-r-\xi)
\left(\xi^{-\alpha_j} - (t-r)^{-\alpha_j}\right)d\xi\right\|
_{L^2(\Omega)\to L^2(\Omega)}\\
\le &\left\|\int_0^{t-r} A^{-1}S'(t-r-\xi)
\xi^{-\alpha_j}d\xi\right\|_{L^2(\Omega)\to L^2(\Omega)}
+ \left\|\int_0^{t-r} A^{-1}S'(t-r-\xi)
(t-r)^{-\alpha_j}d\xi\right\|_{L^2(\Omega)\to L^2(\Omega)}\\
\le& C\int^{t-r}_0 (t-r-\xi)^{\alpha_1-1}\xi^{(1-\alpha_j)-1} d\xi
+ C\int^{t-r}_0 (t-r-\xi)^{\alpha_1-1}d\xi (t-r)^{-\alpha_j}\\
\le& C(t-r)^{\alpha_1-\alpha_j}\frac{\Gamma(\alpha_1)\Gamma(1-\alpha_j)}
{\Gamma(\alpha_1+1-\alpha_j)}
+ \frac{C}{\alpha_1}(t-r)^{\alpha_1-\alpha_j} \longrightarrow 0\quad\mbox{ as}\,\,r \to t.
\end{align*}

Moreover, by (\ref{esti-S'}) and $\vert \xi^{-\alpha_j} - (t-r)^{-\alpha_j}
\vert 
\le C\xi^{-\alpha_j-1}\vert t-r-\xi\vert$ for $0 < \xi < t-r$, we have
$$
\|A^{-1}S'(t-r-\xi)(\xi^{-\alpha_j}-(t-r)^{-\alpha_j})\|_{L^2(\Omega)\to 
L^2(\Omega)}
\leq C (t-r-\xi)^{\alpha_1} \xi^{-\alpha_j-1}\to 0\thinspace 
\mbox{as $\xi\to t-r$}
$$
for $t>r$.  Therefore, by $\www{u}(0)=0$, we obtain
\begin{align*}
I_1(t)
=&\int_0^t \left(\int_0^{t-r} A^{-1}S''(t-r-\xi)
\left(\xi^{-\alpha_j} - (t-r)^{-\alpha_j}\right) d\xi \right) 
\www{p}_j\www{u}(r)dr\\
&+\alpha_j\int_0^t A^{-1}(S(t-r)-S(0))(t-r)^{-\alpha_j-1} \www{p}_j 
\www{u}(r)dr.
\end{align*}
Again by integration by parts, we find
\begin{align*}
I_2(t)
=&\int_0^t A^{-1}(S(t-r)-S(0))(t-r)^{-\alpha_j}\www{p}_j\frac{d\www{u}}{dr}(r)
dr\\
=&A^{-1}(S(t-r)-S(0)) (t-r)^{-\alpha_j}\www{p}_j\www{u}(r)\Big|_{r=0}^{r=t}
 +\int_0^t A^{-1}S'(t-r)(t-r)^{-\alpha_j}\www{p}_j\www{u}(r)dr\\
&-\alpha_j\int_0^t A^{-1}(S(t-r)-S(0))(t-r)^{-\alpha_j-1}\www{p}_j\www{u}(r)dr.
\end{align*}
Now (\ref{esti-S'}) yields
\begin{align*}
&\|A^{-1}(S(t-r)-S(0)) (t-r)^{-\alpha_j}\|_{L^2(\Omega)\to L^2(\Omega)}
=\left\|\int_0^{t-r} A^{-1}S'(\xi) d\xi \right\|_{L^2(\Omega)\to L^2(\Omega)}
(t-r)^{-\alpha_j}\\
\leq& C\int_0^{t-r} \xi^{\alpha_1-1}d\xi (t-r)^{-\alpha_j}
\leq C(t-r)^{\alpha_1-\alpha_j}\to0, \mbox{ as $r\to t$.}
\end{align*}
Consequently, using $\www{u}(0) = 0$, we find
\begin{align*}
 &\int^t_0 A^{-1}S'(t-s) 
    \Big(\widetilde{p}_j\ppp_t^{\alpha_j}\widetilde u(s)\Big)ds\\
=&\int^t_0 A^{-1}S'(t-r)(t-r)^{-\alpha_j} 
   \frac{\www{p}_j\www{u}(r)}{\Gamma(1-\alpha_j)}dr\\
&+ \int^t_0\left(\int^{t-r}_0 A^{-1}S''(t-r-\xi)
(\xi^{-\alpha_j} - (t-r)^{-\alpha_j})d\xi\right)
\frac{\www{p}_j\www{u}(r)}{\Gamma(1-\alpha_j)}dr.
\end{align*}
By Theorem 2.2 in \cite{SY}, we obtain
\begin{align*}
\www{u}(t) 
=&-\int^t_0 A^{-1}S'(t-s)(B\cdot \nabla \www{u}(s)
  + b\www{u}(s) + F(s)) ds\\
+& \sum_{j=2}^{\ell} \int^t_0 A^{-1}S'(t-s)(t-s)^{-\alpha_j} 
   \frac{\www{p}_j\www{u}(s)}{\Gamma(1-\alpha_j)}ds\\
&+ \sum_{j=2}^{\ell}\int^t_0\left(\int^{t-r}_0 A^{-1}S''(t-r-\xi)
(\xi^{-\alpha_j} - (t-r)^{-\alpha_j})d\xi\right)
\frac{\www{p}_j\www{u}(r)}{\Gamma(1-\alpha_j)}dr.
\end{align*}
In the first and the second integrals on the right-hand side we make a 
change of variables $\tau = \frac{t-s}{t}$ and in the third integral
$(\xi,r) \mapsto (\tau,\eta)$ by 
$r = t - t\tau$, $\xi = t\tau\eta$, and we obtain 
\begin{align}\label{sol-u}
\widetilde{u}(t)
&=-t\int_0^1 A^{-1}S'(\tau t)\big(B\cdot\nabla \widetilde{u}((1-\tau)t)
+b\widetilde{u}((1-\tau)t)+F((1-\tau)t)\big)d\tau \nonumber\\
&+ \sum_{j=2}^{\ell}\frac{t^{1-\alpha_j}}{\Gamma(1-\alpha_j)}
    \int_0^1 A^{-1}S'(\tau t)\tau^{-\alpha_i}\widetilde{p}_j\widetilde{u}
\big((1-\tau)t\big)d\tau
    \nonumber\\
&+ \sum_{j=2}^{\ell}\frac{t^{2-\alpha_j}}{\Gamma(1-\alpha_j)}
\int_0^1 \int_0^1 A^{-1}S''\big((1-\eta)\tau t\big)(\eta^{-\alpha_j}-1)
\widetilde{p}_j\widetilde{u}\big((1-\tau)t\big)\tau^{1-\alpha_j}d\eta d\tau.
\end{align}

Furthermore, extending the variable $t$ in (\ref{sol-u}) from $(0,T)$ 
to the sector $\Omega_{\theta,T}$ and setting $\widetilde{u}_0=0$,
we define $\www u_{n+1}(z), n=0,1,\cdots$, $z\in \Omega_{\theta,T}$ 
as follows:
\begin{align} \label{def-induct-u_n}
\widetilde{u}_{n+1}(z)
=&-z\int_0^1 A^{-1}S'(\tau z)\big(B\cdot\nabla \widetilde{u}_n((1-\tau)z)
+b\widetilde{u}_n((1-\tau)z)+F(\cdot,(1-\tau)z)\big)d\tau\nonumber\\
&+ \sum_{j=2}^{\ell}\frac{z^{1-\alpha_j}}{\Gamma(1-\alpha_j)}
   \int_0^1 A^{-1}S'(\tau z)\tau^{-\alpha_j}\widetilde{p}_j\widetilde{u}_n
\big((1-\tau)z\big)d\tau
\nonumber\\
&+ \sum_{j=2}^{\ell}\frac{z^{2-\alpha_j}}{\Gamma(1-\alpha_j)}
\int_0^1 \int_0^1 A^{-1}S''\big((1-\eta)\tau z\big)(\eta^{-\alpha_j}-1)
\widetilde{p_j}\widetilde{u}_n\big((1-\tau)z\big)
      \tau^{1-\alpha_j}d\eta d\tau.
\end{align}

By (\ref{esti-S}) - (\ref{esti-S''}) we can inductively prove that 
$\www{u}_n(z)$ is analytic in $\Omega_{\theta}$ for any $n\in\mathbb{N}$.

Next we claim that  the following estimates hold:
\begin{align} \label{esti-induct-Au_n}
\|A\www u_{n+1}(z)-A \www u_n(z)\|_{L^2(\Omega)}
\leq M_1\frac{(CT^{\alpha_0}\Gamma(\alpha_0))^n}
{\Gamma(n\alpha_0+1)}, \quad z\in \Omega_{\theta,T}, \thinspace n=0,1,2,...
\end{align}
where 
$$
M_1 = T\Vert \lambda\Vert_{W^{2,\infty}(\Omega_{\theta,T})}, \quad
\alpha_0 = \min_{j=2,3,..., \ell}
\left\{\frac{\alpha_1}{2}, \alpha_1-\alpha_j\right\}.
$$

We now prove (\ref{esti-induct-Au_n}) by induction on $n$. 
Firstly, for $n=0$, integrating by parts and using (9) and (\ref{esti-S}), 
we see 
\begin{align}\label{esti-Au1-Au0}
&\|A\www u_{1}(z) - A\www u_0(z)\|_{L^2(\Omega)}
= \|A\www u_{1}(z)\|_{L^2(\Omega)}
= \left\Vert z\int^1_0 S'(\tau z)F((1-\tau)z) d\tau\right\Vert
_{L^2(\Omega)}\nonumber\\
=&\left\| S(\tau z) F(\cdot,(1-\tau)z)|_{\tau=0}^{\tau=1}
-\int_0^1S(\tau z)F'(\cdot,(1-\tau)z)(-z) d\tau \right\|_{L^2(\Omega)}\nonumber\\
\leq& \|S(z)F(\cdot,0)-F(\cdot,z)\|_{L^2(\Omega)} 
+ C\int_0^1 \|F'(\cdot,(1-\tau)z)\|_{L^2(\Omega)}d\tau\nonumber\\
\leq& CT\Vert \lambda\Vert_{W^{2,\infty}(\Omega_{\theta,T})}
= CM_1.
\end{align}
Next, for any $n\in\mathbb{N}$, taking the operator $A$ on both side 
of (\ref{def-induct-u_n}), and using (\ref{esti-S'}) and (\ref{esti-S''}) 
for the $z\in \Omega_{\theta,T}$, we can prove that
\begin{align*}
&\|A\www u_{n+1}(z)-A\www u_n(z)\|_{L^2(\Omega)}\\
\leq& C|z|\int_0^1 |\tau z|^{\frac{\alpha_1}{2}-1}
\|A\widetilde{u}_n((1-\tau)z)-A\widetilde{u}_{n-1}((1-\tau)z)\|_{L^2(\Omega)}
d\tau\\
&+C\sum_{j=2}^{\ell}|z|^{1-\alpha_j}\int_0^1|\tau z|^{\alpha_1-1}
\tau^{-\alpha_j}
\|A\widetilde{u}_n((1-\tau)z)-A\widetilde{u}_{n-1}((1-\tau)z)\|
_{L^2(\Omega)}d\tau\\
&+ C\sum_{j=2}^{\ell}|z|^{2-\alpha_j}\int_0^1\left (\int_0^1 
((1-\eta)\tau |z|)^{\alpha_1-2}(\eta^{-\alpha_j}-1)d\eta\right )
\tau^{1-\alpha_j}\|A\widetilde{u}_n((1-\tau)z)-A\widetilde{u}_{n-1}((1-\tau)z)
\|_{L^2(\Omega)}d\tau.
\end{align*}
Here by $B \in W^{1,\infty}(\Omega)$ and
$\Vert A^{\frac{1}{2}}v\Vert_{L^2(\Omega)}\le C\Vert v\Vert_{H^1(\Omega)}$
and $\Vert v\Vert_{H^2(\Omega)}\le C\Vert Av\Vert_{L^2(\Omega)}$ for 
$v \in D(A)$, we used
\begin{eqnarray*}
&& \Vert S'(\tau z)B\cdot (\nabla \www u_n - \nabla \www u_{n-1})
((1-\tau)z)\Vert_{L^2(\Omega)}
= \Vert A^{-\frac{1}{2}}S'(\tau z)A^{\frac{1}{2}}
(B\cdot (\nabla \www u_n - \nabla \www u_{n-1})((1-\tau)z))
\Vert_{L^2(\Omega)}\\
\le&& C\Vert A^{-\frac{1}{2}}S'(\tau z)\Vert_{L^2(\Omega)\to L^2(\Omega)}
\Vert B\cdot (\nabla \www u_n - \nabla \www u_{n-1})
((1-\tau)z)\Vert_{H^1(\Omega)}\\
\le&& C\Vert A^{-\frac{1}{2}}S'(\tau z)\Vert_{L^2(\Omega)\to L^2(\Omega)}
\Vert (A\www u_n - A\www u_{n-1})((1-\tau)z)\Vert_{L^2(\Omega)}.
\end{eqnarray*}

Noting that
$$
(1-\eta)^{\alpha-2}\leq \left(\frac{1}{2}\right)^{\alpha-2} \quad
\mbox{if $\eta\in \left[0,\frac{1}{2}\right]$}
$$
and
$$
\eta^{-\alpha}-1\leq C(1-\eta) \quad
\mbox{if $\eta\in \left[\frac{1}{2},1\right]$}
$$
for $0 < \alpha < 1$, we obtain
$$
\int_0^1 (1-\eta)^{\alpha_1-2}(\eta^{-\alpha_j}-1)d\eta
\le \int_0^\frac {1}{2} (1-\eta)^{\alpha_1-2}(\eta^{-\alpha_j}-1)d\eta
+ \int_\frac{1}{2}^1 (1-\eta)^{\alpha_1-1}\frac{\eta^{-\alpha_j}-1}{1-\eta}
d\eta
$$
$$
\le C\int_0^{\frac{1}{2}} (\eta^{-\alpha_j}-1) d\eta
+ C\int_{\frac{1}{2}}^1 (1-\eta)^{\alpha_1-1} d\eta<\infty.
$$
Therefore 
\begin{eqnarray*}
&& \Vert A\www u_{n+1}(z)-A\www u_n(z)\Vert_{L^2(\Omega)}\\
\leq &&C\left(|z|^{\frac{\alpha_1}{2}} + \sum_{j=2}^{\ell}\vert z\vert
^{\alpha_1-\alpha_j}\right)
\int^1_0 (\tau^{\frac{\alpha_1}{2}-1} + \sum_{j=2}^{\ell}\tau
^{\alpha_1-\alpha_j-1}) 
\Vert A\www u_{n}((1-\tau)z)-A\www u_{n-1}((1-\tau)z)
\Vert_{L^2(\Omega)} d\tau\\
\le && C\vert z\vert^{\alpha_0}\int^1_0 \tau^{\alpha_0-1}
\Vert A\www u_{n}((1-\tau)z)-A\www u_{n-1}((1-\tau)z)
\Vert_{L^2(\Omega)} d\tau, \quad z\in \Omega_{\theta,T}.
\end{eqnarray*}
Therefore 
\begin{equation}\label{esti-induct-int-Au_n}
\Vert A\www u_{n+1}(z)-A\www u_n(z)\Vert_{L^2(\Omega)}
\le C\vert z\vert^{\alpha_0}\int_0^1 \tau^{\alpha_0-1}
\|A\widetilde{u}_n((1-\tau)z)-A\widetilde{u}_{n-1}((1-\tau)z)\|
_{L^2(\Omega)}d\tau
\end{equation}
for $z \in \Omega_{\theta,T}$.
We note that $\int^1_0 t^{\alpha-1}(1-t)^{\beta-1} dt 
= \frac{\Gamma(\alpha)\Gamma(\beta)}{\Gamma(\alpha+\beta)}$ and
$\Gamma(\alpha+1) = \alpha\Gamma(\alpha)$ for $\alpha, \beta > 0$.
Iterating (\ref{esti-induct-int-Au_n}), in terms of (\ref{esti-Au1-Au0}), 
we obtain
$$
\Vert A\www u_2(z)-A\www u_1(z)\Vert_{L^2(\Omega)}
\le C\vert z\vert^{\alpha_0}\int^1_0 \tau^{\alpha_0-1}M_1 d\tau
= \frac{CM_1}{\alpha_0}\vert z\vert^{\alpha_0},
$$
\begin{eqnarray*}
&&\Vert A\www u_3(z)-A\www u_2(z)\Vert_{L^2(\Omega)}
\le C\vert z\vert^{\alpha_0}\int^1_0 \tau^{\alpha_0-1}
\frac{CM_1}{\alpha_0}\vert (1-\tau)z\vert^{\alpha_0} d\tau\\
= && \frac{(C\vert z\vert^{\alpha_0})^2M_1}{\alpha_0}
\frac{\Gamma(\alpha_0)\Gamma(\alpha_0+1)}{\Gamma(2\alpha_0+1)}
= \frac{(C\vert z\vert^{\alpha_0}\Gamma(\alpha_0))^2M_1}
{\Gamma(2\alpha_0+1)},
\end{eqnarray*}
and
$$
\Vert A\www u_4(z)-A\www u_3(z)\Vert_{L^2(\Omega)}
\le C\vert z\vert^{\alpha_0}\int^1_0 \tau^{\alpha_0-1}
\frac{M_1(C\vert (1-\tau)z\vert^{\alpha_0}\Gamma(\alpha_0))^2}
{\Gamma(2\alpha_0+1)} d\tau
= \frac{(C\vert z\vert^{\alpha_0}\Gamma(\alpha_0))^3M_1}
{\Gamma(3\alpha_0+1)}, \quad \mbox{etc.}
$$
Therefore similarly we obtain
$$
\Vert A\www u_{n+1}(z)-A\www u_n(z)\Vert_{L^2(\Omega)}
\le \frac{(C\vert z\vert^{\alpha_0}\Gamma(\alpha_0))^n}
{\Gamma(n\alpha_0+1)}M_1 
\le \frac{(CT^{\alpha_0}\Gamma(\alpha_0))^n}
{\Gamma(n\alpha_0+1)}M_1, \quad n=0,1,2,..., \thinspace
\forall z \in \Omega_{\theta,T}.
$$
Using (\ref{esti-Gamma}), we see that 
$$
\sum_{n=0}^{\infty} \frac{(CT^{\alpha_0}\Gamma(\alpha_0))^n}
{\Gamma(n\alpha_0+1)} < \infty.
$$
Hence the majorant test implies
$\sum_{n=0}^{\infty} \Vert A\www u_{n+1}(z) - A\www u_n(z)\Vert_{L^2(\Omega)}$
is convergent uniformly in $z \in \Omega_{\theta,T}$.
Therefore there exists $A{u}_*(z)\in L^2(\Omega)$ such that
$\|A\www u_n(z) - A{u}_*(z)\|_{L^2(\Omega)}$ tends to $0$ as
$n \rightarrow \infty$ uniformly in $z\in \Omega_{\theta,T}$.
Therefore $A{u}_*(z)$ is analytic in $\Omega_{\theta,T}$.
Moreover, since $T$ is arbitrarily chosen,  we deduce $A{u}_*(z)$
is analytic in the sector $\Omega_{\theta}$.

Next we prove (\ref{esti-u_C}).  In view of $p\le 0$ on $\overline{\Omega}$, 
we can prove
\begin{equation}\label{esti-max-u}
u(x,t) \le \max\{ 0, \max_{x\in\ppp\Omega, 0\le t \le T} g(x)\lambda(t)\}
\quad \mbox{for $x\in\overline{\Omega}$, $0\le t \le T$}.
\end{equation}
In fact, we can repeat the proof of Theorem 2 in Luchko \cite{Luchko2} which 
assumes that $p_1, ..., p_{\ell}$ are all constants and $p_1>0$, 
$p_j \ge 0$ for $j=2,..., \ell$.  Therefore (\ref{esti-max-u}) holds if 
$u$ is sufficiently smooth.  For our solution with the boundary value
$g(x)\lambda(t)$, applying an approximating argument similar to 
Theorems 4 and 5 in \cite{Podlubny}, we see (\ref{esti-max-u}) for the solutions
constructed in the theorem.

Replacing $u$ by $-u$ and applying (\ref{esti-max-u}), we obtain
$$
-u(x,t) \le \max\{0, \max_{x\in\ppp\Omega, 0\le t \le T} (-g(x)\lambda(t))\},
$$
that is,
$$
u(x,t) \ge \min\{ 0, \min_{x\in\ppp\Omega, 0\le t \le T} g(x)\lambda(t)\}
$$ 
for $x\in\overline{\Omega}$ and $0\le t \le T$.
With (\ref{esti-max-u}), we obtain
$$
\vert u(x,t)\vert \le \max_{x\in\ppp\Omega, 0\le t \le T} 
\vert g(x)\lambda(t)\vert
$$
for $x\in\overline{\Omega}$ and $0\le t \le T$.
Therefore the proof of (\ref{esti-u_C}) is completed.

Finally we show that ${u}_*(z)$ is the mild solution $\widetilde{u}$ 
to (\ref{equ-u'}) when the variable $z$ is restricted to $(0,T)$. 
In fact, denoting the imaginary part of $u_*(t)$,
$\forall t\in (0,T)$ as $\mbox{Im}\, u_*(t)$, we see that $v= \mbox{Re}\thinspace 
u_*(t)$ is a mild solution to the following
initial-boundary problem:
\begin{equation*}
\left\{ {\begin{array}{*{20}c}
{\partial_t^{\alpha_1}v+\sum_{j=2}^{\ell}p_j(x) \partial_t^{\alpha_j} v
   ={\rm div}(\frac{1}{p_1(x)}\nabla v)+ B(x)\cdot \nabla v + b(x)v},
\ (x,t)\in\Omega\times(0,T), \hfill \\
  {v(x,0)=0,\ x\in \Omega,} \hfill \\
  {v(x,t)=0,\ x\in \partial\Omega,\ t \in (0,T).} \hfill
 \end{array} } \right.
 \end{equation*}
Using the uniqueness result of the above problem (e.g., Theorem 2.4 in 
\cite{LY}), we have $\mbox{Im} \thinspace u_*(t)=0$, 
$\forall t\in(0,T)$.
Thus again by the uniqueness argument we see that $u_*(t)=u(t)$, 
$\forall t\in(0,T)$.
Consequently, we see that $\www{u}(t)=u(t)-\lambda(t)\widetilde{g}$ 
is analytic from $[0,T]$ to $H^2(\Omega)$ in view of the analyticity of 
$\lambda(t)$.  This completes the proof of the theorem.
\end{proof}

\section{Uniqueness for inverse boundary value problem}
The proof of Corollary \ref{buharik} is exactly the same as the proof 
of Theorem \ref{thm-uniqu}, and the only difference is that instead of 
the uniqueness result of \cite{SU}, we have to use the 
uniqueness result in \cite{IY}.  Thus it is sufficient to prove 
Theorem \ref{thm-uniqu}

\begin{proof}[Proof of Theorem \ref{thm-uniqu}]
We reduce the inverse problem to the corresponding inverse
boundary value problem for the Schr\"{o}dinger equation
\begin{equation*}
\left\{
\begin{array}{rl}
&\Delta v(x,s) + P_s(x)v(x,s) = 0, \quad x\in \Omega, \\
& v(x,s) = g(x), \quad x\in \partial\Omega,
         \end{array}
\right.
\end{equation*}
for all large $s>0$.  Here and henceforth we set
$$
P_s(x):=p(x)-\sum_{j=1}^{\ell} p_j(x) s^{\alpha_j}, \quad
Q_s(x):=q(x)-\sum_{j=1}^m q_j(x) s^{\beta_j}.
$$

Let $u_1(g)(x,t)$ and $u_2(g)(x,t)$ be the solutions to (\ref{equ-u})
with $(\ell,\vec{\alpha}, p_j,p)$ and $(m,\vec{\beta},q_j,q)$ respectively.
Since $\lambda(t)$ is $t$-analytic in $t>0$, Theorem \ref{thm-analy} implies
that $\Delta u_1(g)(x,t)$ and $\Delta u_2(g)(x,t)$ are $t$-analytic in $t>0$ 
for any fixed $x \in \overline\Omega$.  Therefore, since
$w\mapsto \frac{\ppp w}{\ppp\nu}$: $H^{\frac{3}{2}}(\ppp\Omega)
\rightarrow H^{\frac{1}{2}}(\ppp\Omega)$ is continuous,  
equality (\ref{DN}) implies
$$
\frac{\partial u_1(g)}{\partial\nu}(x,t) 
= \frac{\partial u_2(g)}{\partial\nu}(x,t),
\quad x \in \partial\Omega, \thinspace 0 < t < \infty \quad
\mbox{for $g \in C^{\infty}(\ppp\Omega)$}.
$$
Let $(Lu)(x,s) := \int^{\infty}_0 e^{-st}u(x,t)dt$ be 
the Laplace transform of $u(x,t)$ in $t$ for each fixed 
$x \in \overline{\Omega}$.
By (\ref{esti-u_C}) in Theorem \ref{thm-analy} and assumption 
$\vert \lambda(t) \vert \le C_0e^{C_0t}$ for $t>0$, 
we see that $\vert u(x,t)\vert \le Ce^{C_0t}$ for $t>0$, where
$C>0$ is a constant and is independent of $t>0$ and $x\in \Omega$.
Therefore $(Lu_k(g))(x,s)$, $k=1,2$, exist
for $s > C_1$ where $C_1>0$ is some constant depending only on 
$\lambda$.  Using $u_k(g)(x,0) = 0$, by \cite{Podlubny}, we have
$$
L(\partial_t^{\alpha}u_k(g))(x,s) = s^{\alpha}(Lu_k(g))(x,s),
\quad s > C_1, \thinspace k=1,2.
$$
Therefore by the fractional diffusion equations themselves, it follows that
$L(\Delta u_k(g))(x,s)$, $k=1,2$, exist for $s > C_1$.
Hence
$$
\left\{
\begin{array}{rl}
&\Delta L(u_1(g))(x,s) + P_s(x)L(u_1(g))(x,s) = 0, \quad
x\in \Omega, \thinspace s > C_1,\\
& L(u_1(g))(x,s) = (L\lambda)(s)g(x), \quad x\in \partial\Omega,
\thinspace s > C_1,
\end{array}
\right.
$$
$$
\left\{
\begin{array}{rl}
&\Delta L(u_2(g))(x,s) + Q_s(x)L(u_2(g))(x,s) = 0, \quad
x\in \Omega, \thinspace s > C_1,\\
& L(u_2(g))(x,s) = (L\lambda)(s)g(x), \quad \forall x\in \partial\Omega,\quad\mbox{and}\,\,
\thinspace \forall s > C_1,
\end{array}
\right.
$$
and
$$
\frac{\partial L(u_1(g))}{\partial\nu}(x,s)
= \frac{\partial L(u_2(g))}{\partial\nu}(x,s),
\quad \forall x\in \partial\Omega,\quad\mbox{and}\,\,
\thinspace \forall s > C_1.
$$

Next we consider the following two boundary value problems
\begin{equation}\label{equ_v1}
\left\{
\begin{array}{rl}
&\Delta v_1(x,s) + P_s(x)v_1(x,s) = 0, \quad
x\in \Omega, \thinspace s > C_1,\\
& v_1(x,s) = g(x), \quad \forall x\in \partial\Omega,\quad\mbox{and}\,\,
\thinspace \forall s > C_1.
\end{array}
\right.
\end{equation}
and
\begin{equation}\label{equ_v2}
\left\{
\begin{array}{rl}
&\Delta v_2(x,s) + Q_s(x)v_2(x,s) = 0, \quad
x\in \Omega, \thinspace s > C_1,\\
& v_2(x,s) = g(x), \quad \forall x\in \partial\Omega,\quad\mbox{and}\,\,
\thinspace \forall s > C_1.
\end{array}
\right.
\end{equation}
Then we define their Dirichlet-to-Neumann maps $\Lambda(P_s)$ and
$\Lambda(Q_s)$ by 
$$
\Lambda(P_s)g:=\frac{\partial v_1(g)}{\partial\nu}|_{\partial\Omega},\quad
\Lambda(Q_s)g:=\frac{\partial v_2(g)}{\partial\nu}|_{\partial\Omega}.
$$
Now we prove that there exists a subset 
$\sigma \subset (C_1,\infty)$ such that $\sigma$ contains a non-empty 
open interval and 
\begin{equation}\label{DN-DN'}
\Lambda(\ell,\vec{\alpha},p_j,p)g=\Lambda(m,\vec{\beta},q_j,q)g\Longrightarrow
\Lambda(P_s)g = \Lambda(Q_s)g \quad \mbox{for all 
$g \in C^{\infty}(\partial\Omega)$ and $s\in \sigma$}.
\end{equation}
In fact, $(L\lambda)(z)$ is analytic in Re $z > C_1$ and 
$\{s; \thinspace (L\lambda)(s) = 0, \thinspace s>C_1\}$ has no
accumulation points except for $\infty$.  Therefore 
$\sigma:= (C_1,\infty) \setminus \{s; \thinspace (L\lambda)(s) = 0, 
\thinspace s>C_1\}$ contains a non-empty open interval.  Then 
we can set $\www v_j(g)(x,s) = \frac{L(u_j(g))(x,s)}{(L\lambda)(s)}$ for
$j=1,2$ and $s \in \sigma$.  It is not very difficult to see that 
$\widetilde{v}_1(g)$ and $\www v_2(g)$ are the solutions to (\ref{equ_v1}) 
and (\ref{equ_v2}) respectively. From the uniqueness of the boundary value 
problem, we see that $\widetilde{v}_j(g)=v_j(g)$, $j=1,2$ for $s \in \sigma$.

Here by the density of $C^{\infty}(\ppp\Omega)$ in 
$H^{\frac{1}{2}}(\ppp\Omega)$ and the continuity of 
$\Lambda(P_s) : H^{\frac{1}{2}}(\ppp\Omega) \longrightarrow
H^{-\frac{1}{2}}(\ppp\Omega)$, we see that (\ref{DN-DN'}) holds for all 
$g \in H^{\frac{1}{2}}(\ppp\Omega)$.

Therefore the uniqueness \cite{SU} by Dirichlet-to-Neumann map in
determining a potential, we see that $P_s(x) = Q_s(x)$ for all $x\in\Omega$ and
$s \in \sigma$.  Since $\sigma$ contains a non-empty open 
interval, we obtain $\ell = m$, $\vec{\alpha}=\vec{\beta}$, $p_j=q_j$, $1\le j \le \ell$ 
and $p=q$.  Thus the proof of the theorem is completed.

\end{proof}
%%%%%%%%%%%%%%%%%%%%%%%%%%%%%%%%%%%%%%%%%%%%%%%%%%%%%%%%%%%%%%%%%%%%%%%%%%%%%%%%%%%%%%%%%%%%%%%%%%%%%%%%%%%%%%%
%%%%%%%%%%%%%%%%%%%%%%%%%%%%%%%%%%%%%%%%%%%%%%%%%%%%%%%%%%%%%%%%%%%%%%%%%%%%%%%%%%%%%%%%%%%%%%%%%%%%%%%%%%%%%%%

{\bf Acknowledgement.}
The first author thanks the Chinese Scholarship Council (CSC) 
and Leading Graduate Course for Frontiers of Mathematical Sciences and 
Physics (FMSP, The University of Tokyo) for financial supports.


\begin{thebibliography}{00}

\bibitem{AS}
M. Abramowitz and I.A. Stegun,
{\em Handbook of Mathematical Functions with Formulas, Graphs, and
Mathematical Tables}, Dover, New York, 1972.

\bibitem{BY}
S. Beckers and M. Yamamoto, {\it Regularity and unique existence of 
solution to linear 
diffusion equation with multiple time-fractional derivatives,} in: 
K. Bredies, C. Clason, K. Kunisch, G. von Winckel (Eds.), Control and 
Optimization with PDE Constraints, Birkh\"auser, Basel, 2013, pp. 45-56.

\bibitem{CGS}
A.V. Chechkin, R. Gorenflo and I.M. Sokolov, {\it Fractional diffusion in 
inhomogeneous media, } J. Phys. A, {\bf  38} (2005) 679-684. 

\bibitem{CNYY}
M. Cheng, J. Nakagawa, M. Yamamoto and T. Yamazaki, {\it Uniqueness in an inverse 
problem for a one
dimensional fractional diffusion equation,} Inverse Problems, {\bf 25} (2009) 115002.

\bibitem{Co}
C.F.M. Coimbra, {\it  Mechanics with variable-order differential operators,} Ann. 
Phys. 12 (2003) 692-703. 

%\bibitem{DL}
%K. Diethelm and Y. Luchko, Numerical solution of linear multi-term initial 
%value problems of fractional order. J. Comput. Anal. Appl. 6 (2004), 243-263.

\bibitem{LZJY}
G. Li, D. Zhang, X. Jia and M. Yamamoto, {\it Simultaneous inversion
for the space-dependent diffusion coefficient and the fractional order in the
time-fractional diffusion equation,} Inverse Problems,  {\bf 29} (2013), no. 6, 065014,
36 pp.

\bibitem{HNWY}
Y. Hatano, J. Nakagawa, S. Wang and M. Yamamoto, {\it Determination of order in 
fractional diffusion equation,} J. Math-for-Ind. 5A, (2013), 51-57.

\bibitem{IY} O. Imanuvilov and M. Yamamoto,
{\it Inverse boundary value problem for Schr\"odinger equation in
two dimensions,} SIAM J. Math. Anal., {\bf 44} (2012), 1333-1339.

\bibitem{IYAM} O. Imanuvilov and M. Yamamoto,
{\it Uniqueness for inverse boundary value problems by Dirichlet-to-Neumann
map on subboundarires,}  Milan J. Math., {\bf 81} (2013), 187-258.

\bibitem{Is} V. Isakov,
\textit{Inverse Problems for Partial Differential Equations}, Springer-Verlag,
Berlin, 2006.

\bibitem{KST} A.A. Kilbas, H.M. Srivastava and J.J. Trujillo, 
\textit{Theory and Applications of Fractional Differential Equations}, 
Elsevier, Amsterdam, 2006.

\bibitem{LY}
Z. Li and  M. Yamamoto, {\it Initial-boundary value problems for linear 
diffusion equation with multiple time-fractional derivatives,} 
arXiv:1306.2778v2, 2013.

\bibitem{LLY}
Z. Li, Y. Liu and M. Yamamoto, {\it Initial-boundary value problems for multi-term 
time-fractional diffusion equations with positive constant coefficients,} 
arXiv: 1312.2112, 2013.

\bibitem{LM}
C.F. Lorenzo and T.T. Hartley, {\it Variable order and distributed order fractional 
operators,} Nonlinear Dynam., {\bf 29} (2002) 57-98.

\bibitem{Lu2009JMAA}
Y. Luchko, {\it Maximum principle for the generalized time-fractional diffusion 
equation,} J. Math. Anal. Appl., {\bf 351} (2009) 218-223.   

\bibitem{Lu2009}
Y. Luchko, {\it Boundary value problems for the generalized time-fractional 
diffusion equation of distributed order,} Fract. Calc. Appl. Anal., {\bf 12} (2009) 
409-422. 

%\bibitem{Luchko1}
%Y. Luchko, Some uniqueness and existence results for the 
%initial-boundary-value problems for the generalized time-fractional diffusion 
%equation, Computers and Mathematics with Applications 59 (2010), 1766-1772.

\bibitem{Luchko2}
Y. Luchko, {\it Initial-boundary-value problems for the generalized multi-term 
time-fractional diffusion equation,} J. Math. Anal. Appl., {\bf 374} (2011), 538-548.

\bibitem{LG99}
Y. Luchko and R. Gorenflo,{\it  An operational method for solving fractional 
differential equations with the Caputo derivatives,} Acta Math. Vietnam, 
{\bf 24} (1999) 207-233.

\bibitem{Mai}
F. Mainardi, \textit{Fractional Calculus and Waves in Linear Viscoelasticity},
World Scientific, Singapore, 2010.

\bibitem{MMPG}
F. Mainardi, A. Mura, G. Pagnini and R. Gorenflo, {\it Time-fractional diffusion 
of distributed order,} J. Vib. Control, {\bf 14} (2008) 1267-1290.

\bibitem{MK}
R. Metzler and J. Klafter,{\it  The random walk's guide to anomalous diffusion: 
a fractional dynamics approach,} Physics Reports, {\bf 339} (2000), 1-77.

\bibitem{PKPC}
H.T.C. Pedro, M.H. Kobayashi, J.M.C. Pereira and C.F.M. Coimbra, 
{\it Variable order modelling of diffusive-convective effects on the oscillatory 
flow past a sphere,} J. Vib. Control, {\bf 14} (2008) 1659-1672. 

\bibitem{Podlubny}
I. Podlubny, \textit{Fractional Differential Equations}, 
Academic Press, San Diego, 1999.
 
\bibitem{SY} K. Sakamoto and M. Yamamoto,
{\it Initial value/boundary value problems for fractional diffusion-wave 
equations and applications to some inverse problems,}
J. Math. Anal. Appl., {\bf 382} (2011), 426-447.    

\bibitem{SCK}
I.M. Sokolov, A.V. Chechkin and J. Klafter, {\it Distributed-order fractional 
kinetics,} Acta Phys. Polon. B, {\bf 35} (2004) 1323-1341.

\bibitem{SU} J. Sylvester and G. Uhlmann,
{\it A global uniqueness theorem for an inverse boundary value
problem,} Ann. of Math., {\bf 125} (1987), 153-169.

\bibitem{Ta}
H. Tanabe, \textit{Equations of Evolution}, Pitman, London, 1979.
\end{thebibliography}
\end{document}